\newcommand{\defi}[1]{\textsf{#1}} 
\newcommand{\C}{{\mathbb C}}
\newcommand{\F}{{\mathbb F}}
\newcommand{\PP}{{\mathbb P}}
\newcommand{\Q}{{\mathbb Q}}
\DeclareMathOperator{\Char}{char}
\newtheorem{theorem}{Theorem}[section]
\newtheorem{lemma}[theorem]{Lemma}
\theoremstyle{definition}
\theoremstyle{remark}
\newtheorem{remark}[theorem]{Remark}
\begin{document}

\title[Curves violating the local-global principle]{Curves over every global field violating the local-global principle}
\subjclass[2000]{Primary 11G30; Secondary 14H25}
\keywords{Hasse principle, local-global principle, Dem'janenko-Manin method}
\author{Bjorn Poonen}
\thanks{This research was supported by NSF grant DMS-0841321.}
\address{Department of Mathematics, Massachusetts Institute of Technology, Cambridge, MA 02139-4307, USA}
\email{poonen@math.mit.edu}
\urladdr{http://math.mit.edu/~poonen}
\date{May 14, 2010}

\begin{abstract}
There is an algorithm that takes as input a global field $k$
and produces a curve over $k$
violating the local-global principle.
Also, given a global field $k$ and a nonnegative integer $n$,
one can effectively construct a curve $X$ over $k$ such that $\#X(k)=n$.
\end{abstract}

\maketitle

\section{Introduction}\label{S:introduction}

Let $k$ be a global field, by which we mean a finite extension
of either $\Q$ or $\F_p(t)$ for some prime $p$.
Let $\Omega_k$ be the set of nontrivial places of $k$.
For each $v \in \Omega_k$, let $k_v$ be the completion of $k$ at $v$.
By \defi{variety}, we mean a separated scheme of finite type over a field.
A \defi{curve} is a variety of dimension $1$.
Call a variety \defi{nice} if it is smooth, projective, and
geometrically integral.
Say that a $k$-variety $X$ satisfies the \defi{local-global principle}
if the implication
\[
	X(k_v) \ne \emptyset \text{ for all $v \in \Omega_k$}
	\implies X(k) \ne \emptyset
\]
holds.

Nice genus-$0$ curves (and more generally, quadrics in $\PP^n$)
satisfy the local-global principle:
this follows from the Hasse-Minkowski theorem for quadratic forms.
The first examples of varieties violating the local-global principle
were genus-$1$ curves, such as the smooth projective model of $2y^2=1-17x^4$,
over $\Q$, discovered by Lind \cite{Lind1940} 
and Reichardt \cite{Reichardt1942}.

Our goal is to prove that
there exist curves over every global field
violating the local-global principle.
We can also produce curves having a prescribed positive number of 
$k$-rational points.
In fact, such examples can be constructed effectively:

\begin{theorem}
\label{T:main}
There is an algorithm that takes as input a global field $k$
and a nonnegative integer $n$, 
and outputs a nice curve $X$ over $k$ 
such that $\#X(k)=n$
and $X(k_v) \ne \emptyset$ for all $v \in \Omega_k$.
\end{theorem}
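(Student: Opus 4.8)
The plan is to make $X(k)$ small --- in fact exactly $n$ --- by bounding it \emph{effectively} via the Dem'janenko--Manin method, while arranging $X(k_v)\neq\emptyset$ for free. Recall that this method bounds $X(k)$ whenever $X$ carries more independent non-constant morphisms to an elliptic curve $E/k$ than $\rk E(k)$. The reason to route the argument through it, rather than through an elliptic curve of rank $0$, is effectivity: there is no known algorithm deciding the Mordell--Weil rank, but a single $\ell$-descent (with $\ell$ prime to $\Char k$) always terminates and exhibits an integer $R\geq\rk E(k)$, namely $R=\dim_{\F_\ell}\Sel_\ell(E/k)$, since the $\ell$-Selmer group is a subgroup of a Galois cohomology group cut out by local conditions at the finitely many bad places and the archimedean ones, all of which are computable. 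So the first step is: given $k$, write down the simplest explicit elliptic curve $E/k$ available in the given characteristic and compute such an $R$. (Over a function field one can often take $E$ so that $R=0$ outright, but this is not needed.)

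The second step is to construct a nice curve $X/k$ together with $m:=R+1$ non-constant morphisms $f_1,\dots,f_m\colon X\to E$ over $k$ that are independent in the required sense: no nonzero integer combination $\sum a_i f_i$ is a constant map. Here one must be careful: fibre products of $E$ over $\PP^1$ along degree-$2$ maps will not do, because every degree-$2$ map $E\to\PP^1$ is the quotient by $[-1]$ composed with a translation, so such fibre products break up into copies of $E$. Instead I would take $X$ to be a curve whose Jacobian contains several copies of $E$ up to isogeny --- for instance an appropriate abelian cover of $E$, or a curve carrying an order-$m$ automorphism $\sigma$ over $k$ with $f,f\sigma,\dots,f\sigma^{m-1}$ independent --- and check independence on the isogeny decomposition of $\Jac X$. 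The effective form of the Dem'janenko--Manin theorem then yields an explicit bound $B$ with $h(x)\leq B$ for every $x\in X(k)$; since one can enumerate the points of bounded height on an explicitly given projective curve over a global field, this determines $X(k)$ completely and effectively.

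It remains to secure the two positive requirements. Local solubility is essentially free: for a curve of bounded genus, $X(k_v)\neq\emptyset$ holds automatically at all but finitely many $v$ --- at a place of good reduction the Weil bounds give a smooth point over the residue field once that field is large enough, and Hensel's lemma lifts it --- so only the finitely many remaining places (bad reduction, small residue field, archimedean) need attention, and there one builds in a $k_v$-point by an explicit choice in the construction. To force $\#X(k)=n$, I would work inside a computable family of such curves --- obtained, say, by varying a cover $w^2=h$ (or a higher-degree cover) of $E$ and by quadratic twists --- in which every member has points everywhere locally and, by the previous step, an \emph{effectively computable} value of $\#X(k)$, governed by how the distinguished fibres split; the value $0$ gives the Lind--Reichardt-type local--global violation, and raising the degree of the cover (more sheets, hence more potential rational points over the $k$-point $O\in E(k)$) together with twisting away the unwanted points should let one realize every target $n$. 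Running the enumeration of the previous paragraph over the family until the count equals $n$ then terminates and outputs the required curve.

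The step I expect to be the main obstacle is the simultaneous balancing act in the last two paragraphs: one needs $X$ to stay geometrically integral and to carry \emph{genuinely} independent maps to $E$ (so that the method applies and gives a finite, effective bound), to have points at every completion $k_v$, and \emph{still} to have exactly the prescribed number of global points --- and all of this uniformly over number fields and function fields of every characteristic, with every choice made algorithmically (including a proof that the family really does attain the value $n$). The clash between ``few global points'' and ``points everywhere locally'' is precisely the failure of the local--global principle that the theorem asserts, and so cannot be circumvented; the real work is in engineering it while keeping everything effective and uniform.
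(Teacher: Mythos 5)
Your first step (bounding $\rk E(k)$ by an $\ell$-Selmer computation and invoking Dem'janenko--Manin) is exactly how the paper gets effectivity in characteristic $0$, but the proposal has two genuine gaps. First, you never actually produce the curve with $R+1$ independent maps to $E$: you correctly observe that fibre products over $\PP^1$ fail and then gesture at ``an appropriate abelian cover of $E$'' without constructing one. The paper's solution is to take $E=X_1(11)$ and $Z=X_1(11^r)$ with $r>\rk E(k)$, where the $r$ degeneracy maps $Z\to E$ give the needed independent morphisms inside $\Jac Z$; some such concrete choice is indispensable. (In characteristic $p$ the paper abandons this route entirely and instead uses a nonisotrivial curve of genus $\ge 2$ with a rational point, where finiteness is Samuel's theorem and effectivity is Szpiro's height bound; your parenthetical about function fields does not address isotriviality or effectivity there.)

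The more serious gap is the final step, which you yourself flag: you propose to search a family of covers and twists ``until the count equals $n$,'' but you give no argument that the family contains a member with exactly $n$ points and everywhere-local solubility, so the algorithm is not known to terminate. The paper resolves this tension deterministically rather than by search. It first \emph{inflates} the point count: starting from a curve $Z$ with $Z(k)$ finite, nonempty and computable, it takes iterated double covers $w^2=f$ with $f$ equal to $1$ at every rational point (so all rational points split and the count doubles), until one has $Y$ with $\#Y(k)\ge n+4$, say $Y(k)=\{y_1,\dots,y_m\}$. Then it takes one more double cover $X$ given by $w^2=f$ where $f$ has simple zeros at $y_1,\dots,y_n$ (these ramify, so they lift to $k$-points), takes values $a$, $b$, $ab$ at $y_{n+1},y_{n+2},y_{n+3}$ with $a,b$ independent in $k^\times/k^{\times 2}$, and takes a value $c$ at the remaining $y_i$, where $c$ is a nonsquare in $k$ chosen by weak approximation to be a square in $k_v$ for every $v$ in the finite set $S$ of places where $a,b,ab$ are all nonsquares. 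Then $X(k)$ is exactly $\{y_1,\dots,y_n\}$ (none of $a,b,ab,c$ is a global square), while for every $v$ at least one of $a,b,ab,c$ is a square in $k_v$, so some fibre splits and $X(k_v)\ne\emptyset$. This ``$a,b,ab,c$'' device is the missing idea that simultaneously kills the unwanted global points and guarantees local points; without it, or something equally explicit, your construction does not close.
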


\begin{remark}
For the sake of definiteness,
let us assume that $k$ is presented by giving the minimal polynomial
for a generator of $k$ as an extension of $\Q$ or $\F_p(t)$.
The output can be described by giving a finite list of homogeneous
polynomials that cut out $X$ in some $\PP^n$.
For more details on representation 
of number-theoretic and algebraic-geometric objects, 
see \cite{Lenstra1992}*{\S2} and \cite{Baker-et-al2005}*{\S5}.
\end{remark}

\section{Proof}

\begin{lemma}
\label{L:at least 1}
Given a global field $k$,
one can effectively construct a nice curve $Z$ over $k$
such that $Z(k)$ is finite, nonempty, and computable.
\end{lemma}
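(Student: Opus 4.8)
The plan is to exhibit $Z$ as a curve whose finiteness of rational points can be certified by an \emph{effective} diophantine method — in contrast to Faltings' theorem, which yields finiteness but not computability — and to arrange that the hypotheses of that method can be checked algorithmically from a presentation of $k$. The cleanest instance is a degenerate case of the Dem'janenko--Manin method: if $Z$ is a nice curve over $k$ that admits a non-constant morphism $j\colon Z\to A$ to an abelian variety $A/k$ with $\rk A(k)=0$, then $Z(k)\subseteq\bigcup_{Q\in A(k)}j^{-1}(Q)$; as $j$ is finite onto its one-dimensional image, each fibre $j^{-1}(Q)$ is a finite set of closed points of $Z$, and as $A(k)$ coincides with its torsion subgroup, which is computable for abelian varieties over global fields, one concludes that $Z(k)$ is finite and computable. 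So it suffices to construct, effectively from $k$, a nice curve $Z/k$ possessing a $k$-rational point and a non-constant morphism to a rank-zero abelian variety; and for the rational point one may simply insist that $Z$ itself have one and take $A=\Jac Z$ with $j$ an Abel--Jacobi embedding, reducing everything to: produce a nice curve $Z/k$ with a $k$-point and with $(\Jac Z)(k)$ of rank $0$.

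Here is how I would do that. Over a global function field $k$ with exact field of constants $\F_q$, I would write $k=\F_q(C)$ with $C$ computable, use the zeta function of $C$ to list the simple isogeny factors of $\Jac C$, and then produce a curve $Z/k$ whose Jacobian is, up to isogeny, a constant abelian variety chosen — after enlarging the field of constants if necessary — to have no isogeny factor in common with $\Jac C$; by the Lang--N\'eron theorem this hypothesis forces $(\Jac Z)(k)$ to be finite, and in fact literally computable. Over a number field $k$, I would take $Z$ to be a genus-$2$ curve $y^2=f(x)$ with $f$ a squarefree quintic — so that $Z$ carries the $k$-rational Weierstrass point at infinity — and certify $\rk(\Jac Z)(k)=0$ by a descent on $\Jac Z$, which is effective over any number field since it reduces to unit-group and ideal-class-group computations over $k$ and over the degree-$5$ \'etale algebra $k[x]/(f)$; one runs through candidate $f$, for instance quintic twists of a fixed polynomial, until the descent returns the bound $0$. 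If one prefers the Dem'janenko--Manin method in its non-degenerate form, which relaxes the rank-zero requirement, one instead fixes an abelian variety $A/k$ (say an elliptic curve with complex multiplication, so that a single non-constant morphism $Z\to A$ begets several $\Q$-independent ones upon composing with the complex multiplications), computes an explicit upper bound $\rho\ge\rk A(k)$ by descent, and builds $Z$ so that $A$ occurs in $\Jac Z$ with multiplicity exceeding $\rho$; the method then outputs an explicit height bound for $Z(k)$, again giving computability.

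The step I expect to be the main obstacle is ensuring that the search just described terminates — equivalently, that every global field really does carry a curve of the requisite kind (a genus-$2$ curve with a rational Weierstrass point and rank-zero Jacobian, a curve whose Jacobian contains a prescribed elliptic isogeny factor of prescribed multiplicity, or a curve with constant Jacobian of the above sort) and, crucially, that it can be taken to have a visible $k$-rational point at the same time. Over $\Q$ such curves are plentiful, but over an arbitrary number field one has to appeal to results on the distribution of Selmer groups or of Mordell--Weil ranks in twist families to know that rank-zero examples exist at all, and the twist that endows a curve with a rational point need not be the one that makes the rank small, so the two demands have to be met together. The positive-characteristic case is comparatively soft — constant abelian varieties have finite Mordell--Weil group outright — but it still calls for the bookkeeping of enlarging the field of constants and checking that the morphisms in play remain non-constant and, where it matters, separable.
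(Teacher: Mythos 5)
You have the right framework in mind (the Dem'janenko--Manin method), and you have correctly identified the crux of the matter in your last paragraph: the danger that the search for a suitable $Z$ does not provably terminate, because you have no unconditional guarantee that curves with rank-zero genus-$2$ Jacobian, or with a prescribed elliptic isogeny factor of prescribed multiplicity, exist over an arbitrary number field. The paper sidesteps this entirely by making the construction of $Z$ \emph{explicit and search-free}: it fixes $E=X_1(11)$, computes by a Selmer-group calculation some integer $r>\rk E(k)$, and then takes $Z=X_1(11^r)$. The $r$ degeneracy maps $Z\to E$ (which in moduli terms send $(A,P)$ to $(A/\langle 11^s P\rangle, 11^{s-1}P)$ for $s=1,\dots,r$) realize $E^r$ as an isogeny factor of $\Jac Z$, and since $r$ exceeds $\rk E(k)$ the Dem'janenko--Manin method yields an effective height bound on $Z(k)$; the cusp $\infty$ supplies the rational point. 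No hypothesis about rank distributions is needed, because $r$ is adapted to whatever the rank of $E(k)$ happens to be. Your CM-curve variant suffers from the same underlying gap: you still must \emph{build} a curve $Z$ with $A$ occurring to multiplicity exceeding $\rho$ in $\Jac Z$, and you do not say how; moreover composing a single map $Z\to E$ with the complex multiplications only gives $\Z$-rank $2$ worth of maps, which does not help when $\rk E(k)\ge 2$.

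Your function-field proposal is also a genuinely different route, and a heavier one than necessary: the paper simply takes any nonisotrivial nice curve of genus $\ge 2$ with a $k$-point (e.g.\ an explicit twist family from Poonen--Pop), invokes Samuel's theorem for finiteness, and Szpiro's effective height bound for computability. That avoids the bookkeeping you flag about enlarging the constant field and keeping morphisms nonconstant and separable, and it avoids needing the Lang--N\'eron theorem in the ``disjoint isogeny factors'' form. Your constant-Jacobian idea is plausible in spirit, but as written it still needs an explicit construction certifying the hypotheses, which you do not supply.

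So: right method, correctly diagnosed obstruction, but the key constructive ingredient --- the family $X_1(11^r)$ with its degeneracy maps and a computable $r$ --- is missing, and without something like it the argument does not close.
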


\begin{proof}
First suppose that $\Char k=0$.
Let $E$ be the elliptic curve $X_1(11)$ over $k$.
By computing a Selmer group,
compute an integer $r$ strictly greater than
the rank of the finitely generated abelian group $E(k)$.
Let $Z=X_1(11^r)$ over $k$.
By \cite{Diamond-Shurman2005}*{Theorem~6.6.6},
the Jacobian $J_Z$ of $Z$ is isogenous to a product of $E^r$
with another abelian variety over $k$ (geometrically,
these $r$ copies of $E$ in $J_Z$
arise from the degeneracy maps $Z \to E$ indexed by $s \in \{1,\ldots,r\}$
that in moduli terms send $(A,P)$ to $(A/\langle 11^s P \rangle,11^{s-1} P)$
where $A$ is an elliptic curve and $P$ is a point on $A$ 
of exact order $11^r$).
So the Dem'janenko-Manin method~\cites{Demjanenko1966,Manin1969}
yields an upper bound on the height of points in $Z(k)$.
In particular, $Z(k)$ is finite and computable.
It is also nonempty, since the cusp $\infty$ on $X_1(11^r)$
is a rational point.

If $\Char k>0$,
let $Z$ be any nonisotrivial curve of genus greater than $1$
such that $Z(k)$ is nonempty:
for instance, let $a$ be a transcendental element of $k$,
and use the curve $C_a$ in the first paragraph of the proof
of Theorem~1.4 in \cite{Poonen-Pop2008}.
Then $Z(k)$ is finite by \cite{Samuel1966}*{Th\'eor\`eme~4},
and computable because of the height bound proved 
in \cite{Szpiro1981}*{\S8, Corollaire~2}.
\end{proof}

\begin{lemma}
\label{L:at least n}
Given a global field $k$ and a nonnegative integer $n$,
one can effectively construct a nice curve $Y$ over $k$
such that $Y(k)$ is finite, computable,
and of size at least $n$.
\end{lemma}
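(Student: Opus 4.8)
The plan is to amplify the curve $Z$ of Lemma~\ref{L:at least 1} by passing to a tower of degree-$2$ covers, each arranged to split completely over a single fixed rational point. Lemma~\ref{L:at least 1} gives a nice curve $Z$ together with a known rational point $P_0 \in Z(k)$; if $n \le \#Z(k)$ we take $Y = Z$, so assume $n \ge 2$ and set $\ell = \lceil \log_2 n \rceil$. I will build a chain of nice curves
\[
	Y_\ell \to Y_{\ell-1} \to \cdots \to Y_1 \to Y_0 = Z
\]
with each arrow a finite morphism of degree $2$, together with subsets $S_i \subseteq Y_i(k)$ lying over $P_0$ with $\#S_i = 2^i$; then $Y := Y_\ell$ works, since the composite $\pi\colon Y \to Z$ is finite, so $Y(k) = \bigcup_{Q \in Z(k)} \pi^{-1}(Q)(k)$ is finite and (using the computability of $Z(k)$ and the effectivity of solving the fibre equations) computable, while $\#Y(k) \ge \#S_\ell = 2^\ell \ge n$.

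To pass from $Y_{i-1}$ to $Y_i$, I choose a rational function $u_i$ on $Y_{i-1}$ with the following properties: $u_i$ is regular at each point of $S_{i-1}$ and takes a value there that, when $\Char k \ne 2$, is a nonzero square in $k$ (for instance the value $1$), and, when $\Char k = 2$, is $0$; moreover $u_i$ has a zero or pole of odd order at some closed point $R \notin S_{i-1}$ whose residue field is separable over $k$. Such an $R$ exists with $\deg R$ as large as we please, and such a $u_i$ then exists by Riemann--Roch: inside the space $L(NR)$ with $N$ large, the elements with the prescribed values on $S_{i-1}$ form an affine space of dimension growing linearly in $N$, whereas those that become squares (respectively lie in $\wp$) over $\kbar(Y_{i-1})$ are confined to a subvariety of roughly half that dimension; and a valid $u_i$ can be found effectively. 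I then let $Y_i$ be the smooth projective model of $k(Y_{i-1})(\sqrt{u_i})$ if $\Char k \ne 2$, or of $k(Y_{i-1})(w)$ with $w^2 - w = u_i$ if $\Char k = 2$. The odd-order condition at $R$ (together with the separability of $\kappa(R)$) guarantees that $u_i$ is a nonsquare --- respectively, lies outside $\wp\bigl(\kbar(Y_{i-1})\bigr)$ --- so $Y_i$ is geometrically integral, hence nice; while the conditions on $S_{i-1}$ make $Y_i \to Y_{i-1}$ finite étale over each point of $S_{i-1}$ with two $k$-rational points in the fibre. Taking $S_i$ to be the union of these fibres gives $\#S_i = 2\#S_{i-1} = 2^i$, and all the choices are effective.

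The main obstacle is the verification at each stage that a function $u_i$ juggling the three competing requirements exists: it must be a unit with square residue (or, in characteristic $2$, a zero) at every one of the $2^{i-1}$ accumulated rational points, yet be ``generic'' enough --- an odd-order zero or pole at one separable closed point suffices --- to keep the double cover geometrically integral. Once this Riemann--Roch plus linear-algebra point is settled (and the mild characteristic-$2$ and inseparability bookkeeping dealt with), the finiteness and computability of $Y(k)$ follow formally from those of $Z(k)$ via the finite map $\pi\colon Y \to Z$, and the size bound $\#Y(k) \ge 2^{\lceil \log_2 n\rceil} \ge n$ holds by construction.
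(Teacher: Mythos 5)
Your overall strategy is the same as the paper's: iterate quadratic covers (Kummer in characteristic $\ne 2$, Artin--Schreier in characteristic $2$) of a base curve with finitely many, computable rational points, forcing geometric integrality by imposing odd ramification order at a separable closed point, and forcing the accumulated rational points to split by prescribing the value of the defining function at them. So the route is not new; but there is a genuine gap in positive characteristic that the paper takes pains to close and your sketch skips.

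The gap is the invocation of ``the smooth projective model'' of $\kappa(Y_{i-1})(\sqrt{u_i})$ (or of the Artin--Schreier extension). Over a number field this is harmless: the normalization of any projective model is regular, and regular implies smooth over a perfect field. But a global function field $k$ is imperfect, and a regular projective curve over $k$ need not be smooth; indeed a geometrically integral function field over $k$ need not have \emph{any} smooth projective model. So ``take the smooth projective model'' is not a legal step --- you have to arrange that the normalization actually is smooth. The paper does this explicitly: in characteristic $p>2$ it replaces $f$ by $t+g^p$ so that $f$ and $df=dt$ never vanish simultaneously, forcing all zeros of $f$ to be simple, and then verifies smoothness at the pole $P$ by a local calculation; with these hypotheses the Jacobian criterion certifies that the double cover is smooth wherever $f$ is finite. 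Your heuristic Riemann--Roch dimension count for the existence of a valid $u_i$ does not engage with this issue at all: it says nothing about the zero divisor of $u_i$ away from $R$, and an even-order zero at a closed point with inseparable residue field would produce a regular but non-smooth normalization. You need the same kind of ``only simple zeros'' control that the paper imposes.

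A smaller imprecision: in characteristic $2$ you allow $u_i$ to have ``a zero or pole of odd order'' at $R$ to guarantee that $u_i \notin \wp(\kbar(Y_{i-1}))$. For Artin--Schreier covers, an odd-order \emph{zero} does not produce ramification; you need an odd-order pole (of order prime to $p=2$, i.e.\ odd), which is what the paper requires. This is easy to fix, but as stated the condition is wrong.

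Finally, a minor bookkeeping remark: you track only the fiber $S_i$ over a single $P_0$, which is fine for the lower bound $\#Y(k) \ge 2^\ell \ge n$ since $Y_\ell(k)$ is still finite and computable via the finite map to $Z$. The paper instead splits \emph{all} of $Z(k)$ at each step, giving $\#Y(k)=2\#Z(k)$ exactly; either works.
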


\begin{proof}
Construct $Z$ as in Lemma~\ref{L:at least 1}.
Let $\kappa(Z)$ denote the function field of $Z$.
Find a closed point $P \in Z - Z(k)$ whose residue field
is separable over $k$.

If $\Char k=0$,
the Riemann-Roch theorem, which can be made constructive,
together with a little linear algebra,
lets us find $f \in \kappa(Z)$ taking the value $1$ at each
point of $Z(k)$, and having a simple pole at $P$.
If $\Char k = p>2$,
instead 
find $t \in \kappa(Z)$ such that $t$ has a pole at $P$ and nowhere else,
and such that $t$ takes the value $1$ at each point of $Z(k)$;
then let $f=t+g^p$ for some $g \in \kappa(Z)$ such that $g$ has 
a pole at $P$ of odd order greater than the order of the pole of $t$ at $P$
and no other poles,
such that $g$ is zero at each point of $Z(k)$,
and such that $t+g^p$ is nonzero at each zero of $dt$;
this ensures that $f$ has an odd order pole at $P$ and no other poles,
and is $1$ at each point of $Z(k)$, and has only simple zeros
(since $f$ and $df=dt$ do not simultaneously vanish).
In either case, $f$ has an odd order pole at $P$,
so $\kappa(Z)(\sqrt{f})$ is ramified over $\kappa(Z)$ at $P$,
so the regular projective curve $Y$ with $\kappa(Y)=\kappa(Z)(\sqrt{f})$
is geometrically integral.
A local calculation shows that $Y$ is also smooth, so $Y$ is nice.
Equations for $Y$ can be computed by resolving singularities
of an initial birational model.
The points in $Z(k)$ split in $Y$, so $\#Y(k) = 2 \#Z(k)$,
and $Y(k)$ is computable.
Iterating this paragraph eventually produces a curve $Y$
with enough points.

If $\Char k = 2$,
use the same argument,
but instead adjoin to $\kappa(Z)$ 
a solution $\alpha$ to $\alpha^2-\alpha=f$,
where $f \in \kappa(Z)$ has a pole of high odd order at $P$,
no other poles, and a zero at each point of $Z(k)$.
\end{proof}

\begin{proof}[Proof of Theorem~\ref{T:main}]
Given $k$ and $n$,
apply Lemma~\ref{L:at least n}
to find $Y$ over $k$ with $Y(k)$ finite, computable,
and of size at least $n+4$.
Write $Y(k)=\{y_1,\ldots,y_m\}$.
Find a closed point $P \in Y-Y(k)$ with residue field separable over $k$.

Suppose that $\Char k \ne 2$.
Compute $a,b \in k^\times$
whose images in $k^{\times}/k^{\times 2}$ are $\F_2$-independent.
Let $S$ be the set of places $v \in k$ such that
$a$, $b$, and $ab$ are all nonsquares in $k_v$.
By Hensel's lemma, if $v \nmid 2,\infty$ and $v(a)=v(b)=0$,
then $v \notin S$.
So $S$ is finite and computable.
Let $w \in \Omega_k-S$.
Weak approximation~\cite{Artin-Whaples1945}*{Theorem~1}, 
whose proof is constructive, 
lets us find $c \in k^\times$ 
such that $c$ is a square in $k_v$ for all $v \in S$
and $w(c)$ is odd.
The purpose of $w$ is to ensure that $c$ is not a square in $k$.
Find $f \in \kappa(Y)^\times$ such that 
$f$ has an odd order pole at $P$
and a simple zero at each of $y_1,\ldots,y_n$,
and such that $f(y_{n+1})=a$, $f(y_{n+2})=b$, $f(y_{n+3})=ab$,
and $f(y_{n+4})=\cdots=f(y_m)=c$.
If $\Char k=p>2$, 
the same argument as in the proof of Lemma~\ref{L:at least n}
lets us arrange in addition that $f$ has no poles other than $P$,
and that all zeros of $f$ are simple.
Construct the nice curve $X$ whose function field is $\kappa(Y)(\sqrt{f})$.
Then $X \to Y$ maps $X(k)$ bijectively to $\{y_1,\ldots,y_n\}$,
so $X(k)$ is computable and of size $n$.
Also, for each $v \in \Omega_k$,
at least one of $a,b,ab,c$ is a square in $k_v$,
so $X(k_v) \ne \emptyset$.

If $\Char k = 2$,
use the same argument,
with the following modifications.
For any extension $L$ of $k$,
define the additive homomorphism $\wp\colon L \to L$ by $\wp(t)=t^2-t$.
Construct $a,b \in k$ such that the images of $a$ and $b$
in $k/\wp(k)$ are $\F_2$-independent.
Let $S$ be the set of places $v \in k$ such that
$a$, $b$, and $a+b$ are all outside $\wp(k_v)$.
As before, $S$ is finite and computable.
Choose $w \in \Omega_k-S$.
Use weak approximation to find $c \in k$ such that 
$c \in \wp(k_v)$ for all $v \in S$ but $c \notin \wp(k_w)$.
Find $f \in \kappa(Y)$
such that $f$ has a pole of high odd order at $P$,
a simple pole at $y_1,\ldots,y_n$,
and no other poles,
and such that $f(y_{n+1})=a$, $f(y_{n+2})=b$, $f(y_{n+3})=a+b$,
and $f(y_{n+4})=\cdots=f(y_m)=c$.
Construct the nice curve $X$ whose function field 
is obtained by adjoining to $\kappa(Y)$
a solution $\alpha$ to $\alpha^2-\alpha=f$.
\end{proof}

\section{Other constructions of curves violating the local-global principle}
\label{S:other constructions}

\subsection{Lefschetz pencils in a Ch\^atelet surface}

J.-L.~Colliot-Th\'el\`ene has suggested another approach
to constructing curves violating the local-global principle,
which we now sketch.
For any global field $k$, there exists a 
Ch\^atelet surface over $k$ violating the local-global principle:
see \cite{Poonen2009-chatelet}*{Proposition~5.1}
and \cite{Viray-preprint}*{Theorem~1.1}.
Let $V$ be such a surface.
Choose a projective embedding of $V$.
By \cite{Katz1973}*{Th\'eor\`eme~2.5}, 
after replacing $V$ by a $d$-uple embedding for some $d \ge 1$,
there is a Lefschetz pencil of hyperplane sections of $V$,
fitting together into a family $\tilde{V} \to \PP^1$,
where $\tilde{V}$ is the blowup of $V$ along
the intersection of $V$ with the axis of the pencil.
Since $\tilde{V} \to V$ is a birational morphism,
the Lang-Nishimura theorem
(see \cite{Nishimura1955}, \cite{Lang1954}*{Theorem~3},
and also \cite{CT-Coray-Sansuc1980}*{Lemme~3.1.1})
shows that $\tilde{V}$ has a $k$-point if and only if $V$ does,
and the same holds with $k$ replaced by any completion $k_v$.
By definition of Lefschetz pencil, each geometric fiber of the pencil 
is either an integral curve 
or a union of two nice curves intersecting
transversely in a single point.
By requiring $d \ge 3$ above, we can ensure that each
geometric fiber is also $2$-connected, which means that whenever it
decomposed as a sum $D_1+D_2$ of
two nonzero effective divisors, the intersection number $D_1.D_2$ is 
at least $2$
(the $2$-connectedness follows from~\cite{VandeVen1979}*{Theorem~I}; 
that paper is over $\C$, but the argument works in arbitrary characteristic).
This rules out the possibility of a geometric fiber with two components,
so every geometric fiber is integral.
The ``fibration method'' 
(see, e.g., \cite{CT-Sansuc-SD1987I}, \cite{Colliot-Thelene1998}*{2.1}, 
\cite{CT-Poonen2000}*{Lemma~3.1})
shows that there is a finite set of places $S$ such that 
for every place $v \notin S$ and every point $t \in \PP^1(k)$,
the fiber of $\tilde{V} \to \PP^1$ above $t$ has a $k_v$-point.
For $v \in S$, the set $\tilde{V}(k_v)$ is nonempty,
and its image in $\PP^1$ contains a nonempty open subset $U_v$ of $\PP^1(k_v)$.
By weak approximation, we can find $t \in \PP^1(k)$ such that $t \in U_v$
for all $v \in S$,
and such that the fiber of $\tilde{V} \to \PP^1$ above $t$ is smooth.
That fiber violates the local-global principle.

With a little work, this construction can be made effective.
On the other hand, this approach does not seem to let one construct
curves with a prescribed positive number of points.

\subsection{Atkin-Lehner twists of modular curves}

Theorem~1 of \cite{Clark2008} constructs a natural 
family of curves over $\Q$ violating the local-global principle:
namely, for any squarefree integer $N$ with $N>131$ and $N \ne 163$,
there is a positive-density set of primes $p$
such that the twist of $X_0(N)$ by the main Atkin-Lehner involution $w_N$
and the quadratic extension $\Q(\sqrt{p})/\Q$
violates the local-global principle over $\Q$.
See \cite{Clark2008} for details, and for a connection to the
inverse Galois problem.
The proof involves Faltings' theorem~\cite{Faltings1983},
so it does not yield an effective construction of a suitable pair $(N,p)$.

On the other hand, as P.~Clark explained to me,
a variant of this construction is effective,
and works over an arbitrary global field $k$.
His idea is to replace $X_0(N)$ above with 
a modular curve $X$ having both $\Gamma_0(N)$
and $\Gamma_1(M)$ level structures, for suitable $M$ and $N$
depending on $k$,
and to apply Merel's theorem (or a characteristic $p$ analogue) 
to $X_1(M)$ to control $X(k)$.
See~\cite{Clark-preprint} for details.

\begin{remark}
One can also find counterexamples to the local-global principle over $\Q$
among Atkin-Lehner {\em quotients} of Shimura curves:
see \cite{Rotger-Skorobogatov-Yafaev2005} and \cite{Parent-Yafaev2007}.
\end{remark}

\section*{Acknowledgements} 

I thank Pierre D\`ebes for the suggestion to use the Dem'janenko-Manin method.
I thank Pete L. Clark and Jean-Louis Colliot-Th\'el\`ene for sharing their
ideas sketched in Section~\ref{S:other constructions}.
I also thank Clark for a correction, and Izzet Coskun for suggesting
the reference \cite{VandeVen1979}.
Finally I thank the referee for a few suggestions.

\begin{bibdiv}
\begin{biblist}


\bib{Artin-Whaples1945}{article}{
  author={Artin, Emil},
  author={Whaples, George},
  title={Axiomatic characterization of fields by the product formula for valuations},
  journal={Bull. Amer. Math. Soc.},
  volume={51},
  date={1945},
  pages={469--492},
  issn={0002-9904},
  review={\MR {0013145 (7,111f)}},
}

\bib{Baker-et-al2005}{article}{
  author={Baker, Matthew H.},
  author={Gonz\'alez-Jim\'enez, Enrique},
  author={Gonz\'alez, Josep},
  author={Poonen, Bjorn},
  title={Finiteness results for modular curves of genus at least~$2$},
  journal={Amer. J. Math.},
  volume={127},
  date={2005},
  pages={1325--1387},
  note={{\tt arXiv:math.NT/0211394}},
}

\bib{Clark2008}{article}{
  author={Clark, Pete L.},
  title={An ``anti-Hasse principle'' for prime twists},
  journal={Int. J. Number Theory},
  volume={4},
  date={2008},
  number={4},
  pages={627--637},
  issn={1793-0421},
  review={\MR {2441796}},
}

\bib{Clark-preprint}{misc}{
  author={Clark, Pete L.},
  title={Curves over global fields violating the Hasse principle: some systematic constructions},
  date={2009-05-21},
  note={Preprint, arXiv:0905.3459, to appear in IMRN},
}

\bib{Colliot-Thelene1998}{article}{
  author={Colliot-Th{\'e}l{\`e}ne, J.-L.},
  title={The Hasse principle in a pencil of algebraic varieties},
  conference={ title={Number theory}, address={Tiruchirapalli}, date={1996}, },
  book={ series={Contemp. Math.}, volume={210}, publisher={Amer. Math. Soc.}, place={Providence, RI}, },
  date={1998},
  pages={19--39},
  review={\MR {1478483 (98g:11075)}},
}

\bib{CT-Coray-Sansuc1980}{article}{
  author={Colliot-Th{\'e}l{\`e}ne, Jean-Louis},
  author={Coray, Daniel},
  author={Sansuc, Jean-Jacques},
  title={Descente et principe de Hasse pour certaines vari\'et\'es rationnelles},
  language={French},
  journal={J. reine angew. Math.},
  volume={320},
  date={1980},
  pages={150--191},
  issn={0075-4102},
  review={\MR {592151 (82f:14020)}},
}

\bib{CT-Poonen2000}{article}{
  author={Colliot-Th{\'e}l{\`e}ne, Jean-Louis},
  author={Poonen, Bjorn},
  title={Algebraic families of nonzero elements of Shafarevich-Tate groups},
  journal={J. Amer. Math. Soc.},
  volume={13},
  date={2000},
  number={1},
  pages={83\ndash 99},
  issn={0894-0347},
  review={MR1697093 (2000f:11067)},
}

\bib{CT-Sansuc-SD1987I}{article}{
  author={Colliot-Th{\'e}l{\`e}ne, Jean-Louis},
  author={Sansuc, Jean-Jacques},
  author={Swinnerton-Dyer, Peter},
  title={Intersections of two quadrics and Ch\^atelet surfaces. I},
  journal={J. reine angew. Math.},
  volume={373},
  date={1987},
  pages={37--107},
  issn={0075-4102},
  review={\MR {870307 (88m:11045a)}},
}

\bib{Demjanenko1966}{article}{
  author={Dem'janenko, V. A.},
  title={Rational points of a class of algebraic curves},
  language={Russian},
  journal={Izv. Akad. Nauk SSSR Ser. Mat.},
  volume={30},
  date={1966},
  pages={1373--1396},
  translation={ booktitle={Thirteen papers on group theory, algebraic geometry and algebraic topology}, pages={246--272}, journal={American Mathematical Society Translations, series 2}, volume={66}, place={Providence, RI}, date={1967}, },
  issn={0373-2436},
  review={\MR {0205991 (34 \#5816)}},
}

\bib{Diamond-Shurman2005}{book}{
  author={Diamond, Fred},
  author={Shurman, Jerry},
  title={A first course in modular forms},
  series={Graduate Texts in Mathematics},
  volume={228},
  publisher={Springer-Verlag},
  place={New York},
  date={2005},
  pages={xvi+436},
  isbn={0-387-23229-X},
  review={\MR {2112196 (2006f:11045)}},
}

\bib{Faltings1983}{article}{
  author={Faltings, G.},
  title={Endlichkeitss\"atze f\"ur abelsche Variet\"aten \"uber Zahlk\"orpern},
  language={German},
  journal={Invent. Math.},
  volume={73},
  date={1983},
  number={3},
  pages={349\ndash 366},
  issn={0020-9910},
  review={MR718935 (85g:11026a)},
  translation={ title={Finiteness theorems for abelian varieties over number fields}, booktitle={Arithmetic geometry (Storrs, Conn., 1984)}, pages={9\ndash 27}, translator = {Edward Shipz}, publisher={Springer}, place={New York}, date={1986}, note={Erratum in: Invent.\ Math.\ {\bf 75} (1984), 381}, },
}

\bib{Katz1973}{article}{
  author={Katz, Nicholas M.},
  title={Pinceaux de Lefschetz: th\'eor\`eme d'existence},
  date={1973},
  pages={212--253},
  book={ title={Groupes de monodromie en g\'eom\'etrie alg\'ebrique. II}, language={French}, note={S\'eminaire de G\'eom\'etrie Alg\'ebrique du Bois-Marie 1967--1969 (SGA 7 II); Dirig\'e par P. Deligne et N. Katz, Lecture Notes in Mathematics, Vol. 340, Expos\'e~XVII}, publisher={Springer-Verlag}, place={Berlin}, pages={x+438}, review={MR0354657 (50 \#7135)}, },
}

\bib{Lang1954}{article}{
  author={Lang, Serge},
  title={Some applications of the local uniformization theorem},
  journal={Amer. J. Math.},
  volume={76},
  date={1954},
  pages={362--374},
  issn={0002-9327},
  review={\MR {0062722 (16,7a)}},
}

\bib{Lenstra1992}{article}{
  author={Lenstra, H. W., Jr.},
  title={Algorithms in algebraic number theory},
  journal={Bull. Amer. Math. Soc. (N.S.)},
  volume={26},
  date={1992},
  number={2},
  pages={211--244},
  issn={0273-0979},
  review={\MR {1129315 (93g:11131)}},
}

\bib{Lind1940}{article}{
  author={Lind, Carl-Erik},
  title={Untersuchungen \"uber die rationalen Punkte der ebenen kubischen Kurven vom Geschlecht Eins},
  language={German},
  journal={Thesis, University of Uppsala,},
  volume={1940},
  date={1940},
  pages={97},
  review={\MR {0022563 (9,225c)}},
}

\bib{Manin1969}{article}{
  author={Manin, Ju. I.},
  title={The $p$-torsion of elliptic curves is uniformly bounded},
  language={Russian},
  journal={Izv. Akad. Nauk SSSR Ser. Mat.},
  volume={33},
  date={1969},
  pages={459--465},
  translation={ journal={Mathematics of the USSR-Izvestiya}, volume={3}, number={3}, pages={433--438}, date={1969}, },
  issn={0373-2436},
  review={\MR {0272786 (42 \#7667)}},
}

\bib{Nishimura1955}{article}{
  author={Nishimura, Hajime},
  title={Some remarks on rational points},
  journal={Mem. Coll. Sci. Univ. Kyoto. Ser. A. Math.},
  volume={29},
  date={1955},
  pages={189--192},
  review={\MR {0095851 (20 \#2349)}},
}

\bib{Parent-Yafaev2007}{article}{
  author={Parent, Pierre},
  author={Yafaev, Andrei},
  title={Proving the triviality of rational points on Atkin-Lehner quotients of Shimura curves},
  journal={Math. Ann.},
  volume={339},
  date={2007},
  number={4},
  pages={915--935},
  issn={0025-5831},
  review={\MR {2341907 (2008m:11120)}},
}

\bib{Poonen2009-chatelet}{article}{
  author={Poonen, Bjorn},
  title={Existence of rational points on smooth projective varieties},
  journal={J. Eur. Math. Soc. (JEMS)},
  volume={11},
  date={2009},
  number={3},
  pages={529--543},
  issn={1435-9855},
  review={\MR {2505440}},
}

\bib{Poonen-Pop2008}{article}{
  author={Poonen, Bjorn},
  author={Pop, Florian},
  title={First-order characterization of function field invariants over large fields},
  conference={ title={Model theory with applications to algebra and analysis. Vol. 2}, },
  book={ series={London Math. Soc. Lecture Note Ser.}, volume={350}, publisher={Cambridge Univ. Press}, place={Cambridge}, },
  date={2008},
  pages={255--271},
  review={\MR {2432122}},
}

\bib{Reichardt1942}{article}{
  author={Reichardt, Hans},
  title={Einige im Kleinen \"uberall l\"osbare, im Grossen unl\"osbare diophantische Gleichungen},
  language={German},
  journal={J. reine angew. Math.},
  volume={184},
  date={1942},
  pages={12--18},
  issn={0075-4102},
  review={\MR {0009381 (5,141c)}},
}

\bib{Rotger-Skorobogatov-Yafaev2005}{article}{
  author={Rotger, Victor},
  author={Skorobogatov, Alexei},
  author={Yafaev, Andrei},
  title={Failure of the Hasse principle for Atkin-Lehner quotients of Shimura curves over $\mathbb {Q}$},
  language={English, with English and Russian summaries},
  journal={Mosc. Math. J.},
  volume={5},
  date={2005},
  number={2},
  pages={463--476, 495},
  issn={1609-3321},
  review={\MR {2200761 (2006m:11088)}},
}

\bib{Samuel1966}{article}{
  author={Samuel, Pierre},
  title={Compl\'ements \`a un article de Hans Grauert sur la conjecture de Mordell},
  language={French},
  journal={Inst. Hautes \'Etudes Sci. Publ. Math.},
  number={29},
  date={1966},
  pages={55\ndash 62},
  issn={0073-8301},
  review={MR0204430 (34 \#4272)},
}

\bib{Szpiro1981}{article}{
  author={Szpiro, Lucien},
  title={Propri\'et\'es num\'eriques du faisceau dualisant relatif},
  language={French},
  pages={44--78},
  book={ title={S\'eminaire sur les Pinceaux de Courbes de Genre au Moins Deux}, series={Ast\'erisque}, volume={86}, publisher={Soci\'et\'e Math\'ematique de France}, },
  date={1981},
  review={\MR {642675 (83c:14020)}},
}

\bib{VandeVen1979}{article}{
  author={Van de Ven, A.},
  title={On the $2$-connectedness of very ample divisors on a surface},
  journal={Duke Math. J.},
  volume={46},
  date={1979},
  number={2},
  pages={403--407},
  issn={0012-7094},
  review={\MR {534058 (82f:14032)}},
}

\bib{Viray-preprint}{misc}{
  author={Viray, Bianca},
  title={Failure of the Hasse principle for Ch\^atelet surfaces in characteristic $2$},
  date={2009-10-12},
  note={Preprint, {\tt arXiv:0902.3644}},
}

\end{biblist}
\end{bibdiv}

\end{document}